\numberwithin{equation}{section}
\theoremstyle{plain}
\newtheorem{theorem}[equation]{Theorem}
\newtheorem{lemma}[equation]{Lemma}
\newtheorem{proposition}[equation]{Proposition}
\theoremstyle{definition}
\newtheorem{remark}[equation]{Remark}
\newtheorem{definition}[equation]{Definition}
\newtheorem{question}[equation]{Question}
\begin{document}

\title{Co-rank of weakly parafree $3$-manifold groups}

\author{Shelly Harvey$^{\dag}$}
\address{Rice University, Houston, TX, USA}
\email{shelly@rice.edu}

\author{Eamonn Tweedy}
\address{Widener University, Philadelphia, PA, USA}
\email{etweedy@widener.edu}

\thanks{$^{\dag}$ The author was partially supported by National Science Foundation grants DMS-1309070 and DMS-1613279 and by a grant from the Simons Foundation (\#304538 to Shelly Harvey).}

\keywords{cut number, co-rank, 3-manifold, homology handlebody, fundamental group, large, weakly parafree}

\begin{abstract}
Recall that a group is called large if it has a finite index subgroup which surjects onto a non-abelian free group.  By work of Agol and Cooper-Long-Reid, most $3$-manifold groups are large; in particular, the fundamental groups of hyperbolic $3$-manifolds are large.  In previous work, the first author gave examples of closed, hyperbolic $3$-manifolds with arbitrarily large first homology rank but whose fundamental groups do not surject onto a non-abelian free group.  We call a group very large if it surjects onto a non-abelian free group.   In this paper, we consider the question of whether the groups of homology handlebodies -  which are very close to being free - are very large.  We show that the fundamental group of W. Thurston's tripus manifold, is not very large; it is known to be weakly parafree by Stallings' Theorem and large by the work of Cooper-Long-Reid since the tripus is a hyperbolic manifold with totally geodesic boundary.   It is still unknown if a $3$-manifold group that is weakly parafree of rank at least $3$ must be very large.  However, we more generally consider the co-rank of the fundamental group, also known as the cut number of the manifold.  For each integer $g \geq 1$ we construct a homology handlebody $Y_g$ of genus $g$ whose group has co-rank equal to $r(g)$, where $r(g)=g/2$ for $g$ even and $r(g)=(g+1)/2$ for $g$ odd.  That is, these groups are weakly parafree of rank $g$ and surject onto a free group of rank roughly half of $g$ but no larger.  
\end{abstract}

\maketitle


\section{Introduction}

Let $M$ be a compact, connected, orientable $3$-dimensional manifold.  If $M$ is irreducible and has non-empty incompressible boundary and $M$ is not covered by a $S^1 \times S^1 \times I$, then it was shown by D. Cooper, D. Long, and A. Reid \cite{CLR} that $\pi_1(M)$ is large.   Recall that a group is said to be large if it has a finite index subgroup that maps onto a non-abelian free group.  In addition, using work of F. Haglund and D. Wise, I. Agol\footnote{Agol proved the case when $M$ is hyperbolic; the other cases were known.  See \cite{AFW} for more details. } \cite{Agol} showed that if $M$ is closed and irreducible and $\pi_1(M)$ is not finite or solvable then $\pi_1(M)$ is large (see also \cite[Diagram 4]{AFW}).  As a result, we see that most $3$-manifold groups are large.  One could then ask what is the minimal size of the index one needs or if even if $\pi_1(M)$ itself maps onto a non-abelian free group. 

\begin{definition} Let $G$ be a group.  We say that $G$ is \textbf{very large} if it admits a surjective homomorphism onto a non-abelian free group. \end{definition}

In 2002, the first author gave examples of closed hyperbolic $3$-manifold groups with arbitrarily large first homology rank but that are not very large. By Agol's theorem, these examples are all large. 

\newtheorem*{theoremHa}{Theorem (\cite{Ha02})}
\begin{theoremHa} For each $n\geq 1$, there is a closed, orientable, hyperbolic $3$-dimensional manifold $M_n$ such $\beta_1(M_n)=n$ and $\pi_1(M_n)$ is not very large. 
\end{theoremHa}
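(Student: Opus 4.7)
The plan is to construct, for each $n \geq 1$, a closed hyperbolic $3$-manifold $M_n$ with $\beta_1(M_n) = n$, and then exploit an Alexander-theoretic obstruction to rule out any surjection $\pi_1(M_n) \twoheadrightarrow F_2$ (this suffices, since a group is very large iff it surjects onto $F_2$, every higher $F_k$ surjecting onto $F_2$ in turn).

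For the construction, I would begin with a hyperbolic link $L \subset S^3$ of $n+1$ components whose exterior $X_L$ is therefore hyperbolic and has $\beta_1(X_L) = n+1$. Performing Dehn filling along one boundary torus with a slope lying in the kernel of the projection $H_1(\partial)\to H_1(X_L)/\textrm{torsion}$ dictated by a primitive cohomology class produces a closed $3$-manifold $M_n$ with $\beta_1(M_n) = n$. By Thurston's hyperbolic Dehn surgery theorem, all but finitely many such slopes preserve hyperbolicity, so the slope can be chosen to satisfy both requirements. An alternative, and for bookkeeping sometimes cleaner, route is to build $M_n$ as a cyclic branched cover of $S^3$ along a carefully chosen link, which keeps the Alexander module transparent.

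To obstruct very-largeness, I would observe that a surjection $\phi : \pi_1(M_n) \twoheadrightarrow F_2$ induces compatible maps between the derived-series quotients and hence between classical and higher-order Alexander modules. The Alexander module of $F_2$ over $\mathbb{Z}[H_1(F_2)]$ has a rigid free structure, so such a surjection forces a quotient of $H_1(M_n; \mathbb{Q}[H_1(M_n)/\textrm{torsion}])$ -- or, if that fails to discriminate, of a higher-order Alexander module in the sense of Cochran associated to the terms of the derived series -- of a very specific form. One then arranges the building block $L$ so that a numerical invariant (degree of the appropriate higher-order Alexander polynomial, or an $L^2$-Betti number read off the module) violates the inequality implied by having such a quotient. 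These invariants are computable from a presentation of $\pi_1(M_n)$ via Fox calculus once the Dehn filling description is fixed.

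The main obstacle is reconciling three competing constraints on the slope and the building block: preserving hyperbolicity, pinning $\beta_1(M_n)$ to exactly $n$, and preserving the Alexander-theoretic rigidity through the filling. Dehn surgery can shift the higher-order invariants in subtle ways, so the core technical work lies in choosing the slope within the cofinite hyperbolic family so that the relevant degree inequality still holds after filling. Producing a single family $\{M_n\}$ witnessing all three properties simultaneously is the delicate step; once the inequality is established, the obstruction to a surjection onto $F_2$ follows formally.
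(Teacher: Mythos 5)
This statement is quoted from \cite{Ha02} and is not proved in the present paper, so there is no in-paper argument to compare against line by line.  What the paper does tell you about the original proof is substantive, though, and it differs from your plan in an important way: the obstruction actually used in \cite{Ha02} is (a) the Kervaire-type observation that a very large group must have an infinite \emph{cyclic} cover with positive $\mathbb{Z}[t^{\pm 1}]$-rank in $H_1$, combined with (b) the fact that the examples were arranged so that \emph{every} infinite cyclic cover has torsion first homology, and (c) the lower-central-series obstruction that the groups fail to surject even onto $F/F_4$.  Your appeal to ``Alexander-theoretic rigidity'' is in the right spirit as far as (a) goes, but the clean version is the one-variable rank condition over $\mathbb{Z}[t^{\pm 1}]$ applied to \emph{all} epimorphisms to $\mathbb{Z}$, not the multivariable module over $\mathbb{Q}[H_1(M_n)/\textrm{torsion}]$ you describe (which for $\beta_1 > 2$ is not even the cover one gets from composing with $F_2 \to \mathbb{Z}^2$).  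The excursions into higher-order Alexander modules, derived-series quotients, and $L^2$-Betti numbers are not part of the argument in \cite{Ha02} and would be a significant and unnecessary complication here.

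More seriously, your proposal never produces an actual family $\{M_n\}$.  You correctly identify the three competing constraints (hyperbolicity, $\beta_1 = n$, and controlling the Alexander/Massey structure through the surgery), and you correctly note that Thurston's hyperbolic Dehn surgery theorem leaves a cofinite set of slopes to work with; but the crux of \cite{Ha02} is precisely the arrangement of the link and surgery data so that the torsion condition (or the $F/F_4$ obstruction, which is a Massey-product condition on the cohomology ring) holds for the filled manifold.  Saying this ``is the delicate step'' and stopping there leaves the theorem unproved.  You would need to exhibit a specific link, compute the cup/Massey structure or Alexander modules after filling, and verify the obstruction for every epimorphism to $\mathbb{Z}$ — exactly the content you have deferred.
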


In fact, it was shown that the groups cannot even map onto $F/F_4$ where $F$ is the free group of rank $2$ and $F_4$ is the $4^{th}$ term of the lower central series of $F$ \cite[Proposition 3.3]{Ha02}.  See also work by C. Leinger and A. Reid \cite{LeRe02}, A. Sikora \cite{Si05}), and I. Gelbukh \cite{G15,G17}.

In this article, we are interested in the fundamental groups of $3$-dimensional homology handlebodies.  Their fundamental groups are very close to being free; they are weakly parafree, and hence the question of whether they are  very large becomes much more subtle and interesting question.   Recall that $M$ is a homology handlebody of genus $g$ if it has the same homology groups as a $3$-dimensional handlebody of genus $g$; that is, $\tilde{H}_1(M)\cong \mathbb{Z}^g$ and $\tilde{H}_i(M)=0$ for $i\neq 1$.

\begin{remark} If $M$ is a homology handlebody of genus $g$ and $G=\pi_1(M)$ then by Stallings' Theorem \cite{Stallings}, $G$ is weakly parafree of rank $g$; that is, 
	\[F(g)/F(g)_k \cong G/G_k\]
	for all $k\geq 1$ where $F(g)$ is the free group of rank $g$ and $G_k$ is the $k^{th}$ term of the lower central series of $G$.
\end{remark}

Thus, groups of homology handlebodies of genus $g$ look quite similar to free groups of large rank as $g$ increases.  In particular, one cannot hope to use the lower central series quotients to obstruct being very large.   

It was shown in \cite[p. 44]{Ker01} (see also Proposition 2.2 in \cite{Ha02}) that if the fundamental group of $M$ is very large then there is an infinite cyclic cover whose first homology has positive rank as a left $\mathbb{Z}[t^{\pm 1}]$-module.  The examples in \cite{Ha02} were shown to have torsion (as a $\mathbb{Z}[t^{\pm 1}]$ module) in first homology for every infinite cyclic cover and hence their groups could not be very large.  However, this obstruction also fails for homology handlebodies.  In fact, the rank of the first homology of any poly-torsion-free-abelian covering space of $M$ is maximal.  Recall that a group $\Gamma$ is poly-torsion-free-abelian (PTFA) if it admits
a normal series $\left\{  1\right\}  =G_{0}\vartriangleleft
G_{1}\vartriangleleft \cdots\vartriangleleft G_{n}=\Gamma$ such
that each of the factors $G_{i+1}/G_{i}$ is torsion-free abelian.  If $A$ is a finitely generated left module over $\mathbb{Z}\Gamma$ with $\Gamma$ PTFA then it has a well defined rank as a $\mathbb{Z}\Gamma$-module.  Note that any free abelian group is PTFA.  

\begin{remark}Let $\Gamma$ be a PTFA group. If $M$ is a $3$-dimensional homology handlebody of genus $g$, then by \cite[Lemma 2.12]{COT} for any non-trivial homomorphism $\phi:\pi_1(M) \rightarrow \Gamma$, the covering space $M_\phi$ associated to $\phi$ satisfies
	\[rank_{\mathbb{Z}\Gamma} H_1(M_\phi)=g-1.\]
\end{remark}

We show in Lemma~\ref{lem:summand}, that if $\pi_1(M)$ is very large then there is a $\mathbb{Z}^2$ covering space of $M$, $M_\phi$, such that  $H_1(M_\phi)$ has a $\mathbb{Z}[\mathbb{Z}^2]$ summand.  Using this, we show that W. Thurston's tripus is a homology handlebody of genus $2$ whose group is not very large.  It is known to be hyperbolic manifold manifold and hence is large by Agol's theorem. 

\newtheorem*{proptri}{Proposition~\ref{prop:tri}}
\begin{proptri}
Let $T$ be W. Thurston's Tripus manifold.  Then $\pi_1(T)$ is a weakly parafree group of rank $2$ that is large but not very large. 
\end{proptri}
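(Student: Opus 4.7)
The proof has three parts: that $\pi_1(T)$ is weakly parafree of rank $2$, that it is large, and --- the main point --- that it is \emph{not} very large.

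The first two statements follow from general machinery and known facts about the tripus. Thurston's tripus $T$ is a compact orientable $3$-manifold whose boundary is a genus-$2$ surface and whose homology agrees with that of a genus-$2$ handlebody; in particular $T$ is a homology handlebody of genus $2$, so by Stallings' theorem (as in the remark above) $\pi_1(T)$ is weakly parafree of rank $2$. Largeness follows from Cooper--Long--Reid: $T$ is hyperbolic with totally geodesic boundary, hence irreducible with non-empty incompressible boundary, and is plainly not covered by $T^{2}\times I$.

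For the main claim that $\pi_1(T)$ is not very large, the plan is to apply the contrapositive of Lemma~\ref{lem:summand}: it suffices to show that no regular $\mathbb{Z}^{2}$-cover of $T$ has $H_{1}$ containing a $\mathbb{Z}[\mathbb{Z}^{2}]$ direct summand. Since $H_{1}(T)\cong\mathbb{Z}^{2}$, every surjection $\phi\colon\pi_{1}(T)\twoheadrightarrow\mathbb{Z}^{2}$ factors as the Hurewicz map followed by an automorphism of $\mathbb{Z}^{2}$, so $\ker(\phi)$ is always the commutator subgroup. Thus up to isomorphism the only such cover is the maximal abelian cover $\widetilde{T}\to T$, and the task reduces to proving that the Alexander module $H_{1}(\widetilde{T};\mathbb{Z})$, viewed over $\Lambda=\mathbb{Z}[\mathbb{Z}^{2}]$, has no $\Lambda$-summand isomorphic to $\Lambda$ itself.

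To compute this module, fix a two-generator one-relator presentation $\pi_{1}(T)=\langle a,b\mid r\rangle$ (which exists since $T$ collapses to a $2$-complex with one $0$-cell, two $1$-cells, and one $2$-cell). Fox calculus then presents $H_{1}(\widetilde{T})$ as the first homology of
\[
\Lambda \xrightarrow{\ J\ } \Lambda^{2} \xrightarrow{\ (t_{1}-1,\,t_{2}-1)\ } \Lambda,
\]
where $J=\bigl(\overline{\partial r/\partial a},\ \overline{\partial r/\partial b}\bigr)^{T}$ is the Fox Jacobian mapped to $\Lambda$ via the abelianization. By the PTFA remark, this module has $\Lambda$-rank exactly $1$, so the question becomes whether it decomposes as $\Lambda\oplus(\text{torsion})$. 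I would rule this out by extracting the torsion submodule and showing that the rank-$1$ torsion-free quotient is not free over $\Lambda$, or equivalently by computing enough elementary ideals of $J$ to force any would-be free generator to interact nontrivially with the torsion.

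The main obstacle is this last algebraic step. Because $\Lambda=\mathbb{Z}[t_{1}^{\pm 1},t_{2}^{\pm 1}]$ is a UFD but not a PID, torsion-free $\Lambda$-modules of rank $1$ need not be free, so the absence of a $\Lambda$-summand must be established directly from the structure of the Fox Jacobian rather than by invoking a structure theorem. The cleanest route will likely be to specialize $t_{1}=1$ or $t_{2}=1$ and exhibit an obstruction in the resulting $\mathbb{Z}[t^{\pm 1}]$-module (which is amenable to classical Alexander polynomial techniques), or to exhibit a nonzero element of $\Lambda$ annihilating every candidate generator of a free summand modulo the relations imposed by $J$.
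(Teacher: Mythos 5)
Your overall strategy is the same as the paper's: invoke Lemma~\ref{lem:summand}, observe that the universal abelian cover is the only $\mathbb{Z}^2$-cover, compute the rank-$1$ Alexander module over $\Lambda=\mathbb{Z}[\mathbb{Z}^2]$, and show it has no free $\Lambda$-summand. The first two parts of the claim (weakly parafree of rank $2$; large via Cooper--Long--Reid) are fine and match the paper. But the execution of the algebraic step has two genuine problems.

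First, the claimed two-generator one-relator presentation $\pi_1(T)=\langle a,b\mid r\rangle$ does not exist. Since $T$ is hyperbolic with totally geodesic boundary it is aspherical, so $\chi(\pi_1(T))=\chi(T)=-1$. On the other hand $\pi_1(T)$ is torsion-free (as the group of an aspherical manifold), so any one-relator presentation has a relator that is not a proper power, and by Lyndon's theorem the presentation $2$-complex would then be aspherical, forcing $\chi(\pi_1(T))=1-2+1=0$. This contradiction shows that the minimal one-relator presentation has at least three generators (indeed the paper's Wirtinger presentation reduces to $3$ generators and $1$ relator, consistent with $\chi=-1$). Your Fox-calculus chain complex $\Lambda\to\Lambda^2\to\Lambda$ therefore does not compute the right module; you need $\Lambda\to\Lambda^3\to\Lambda$ (or the paper's presentation matrix directly), which is no longer a one-column Jacobian.

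Second, and more seriously, the concrete detection routes you suggest for non-freeness do not work. The paper reduces the Alexander module to the presentation $[\,ac+a-1,\ 2a^2c\,]$ over $\mathbb{Z}[a^{\pm1},c^{\pm1}]$, which is torsion-free (by Lemma~\ref{lem:gcd}) of rank $1$. Your first suggestion, specializing $t_1=1$ or $t_2=1$, fails: setting $c=1$ gives $[\,2a-1,\ 2a^2\,]$, and since $a^{-1}\cdot 2a^2-(2a-1)=1$ the ideal $(2a-1,2a^2)$ is all of $\mathbb{Z}[a^{\pm1}]$, so that cokernel is free; setting $a=1$ gives $[\,c,\ 2c\,]=c\cdot[1,2]$, and since $c$ is a unit and $(1,2)$ is primitive the cokernel is again free. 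So both one-variable specializations destroy the obstruction. Your second suggestion, exhibiting a nonzero element of $\Lambda$ that annihilates a would-be free generator, is incoherent once you know the module is torsion-free of rank $1$: nothing annihilates anything. The obstruction is intrinsically a mod-$2$ phenomenon, which is why the paper's proof reduces coefficients to $\mathbb{Z}_2$ and observes that the resulting $\mathbb{Z}_2[a^{\pm1},c^{\pm1}]$-module, presented by $[\,ac+a+1,\ 0\,]$, has nonzero torsion and hence cannot be free; a base-change argument then shows the original module is not free either. You would need to find this (or an equivalent) detection rather than the ones you propose, and you would also need the explicit torsion-freeness argument (Lemma~\ref{lem:gcd}) to conclude, as the paper does, that ``torsion-free of rank $1$ plus a free summand'' forces the module to equal $\Lambda$, contradicting non-freeness.
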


It is unknown if there is a homology handlebody of genus $g\geq 3$ whose group is not very large. All of the examples that the authors have considered with $g\geq 3$ have been shown to be very large.  

\begin{question} If $Y$ is a homology handlebody of genus $g$ with $g\geq 3$, is $\pi_1(Y)$ very large?  More generally, if $G$ is a finitely presented group with $H_1(G)\cong \mathbb{Z}^g$ for $g\geq 3$ and $H_2(G)=0$, is $G$ very large? 
\end{question}

Often a homology handlebody group is very large.  In this case, we ask what is the maximal rank free group that arises as the quotient of the group.  
The \textbf{cut number of $M$}, $
c\left( M\right) $, is defined to be the maximal number of components of a
compact, orientable, $2$--sided surface $F$ properly embedded in $M$ such that $M\smallsetminus F$ is
connected.  Hence, for any $n\leq c\left( M\right) $, we can construct a
map $f : M\rightarrow \bigvee_{i=1}^{n}S^{1}$ such that the induced map on $
\pi _{1}$ is surjective.  That is, there exists a surjective map $f_{\ast
}: \pi _{1}\left( M\right) \twoheadrightarrow F\left( n\right) $, where $
F\left( n\right) $ is the free group of rank $n$.
Conversely, if we have any epimorphism $\phi : \pi _{1}\left( M\right)
\twoheadrightarrow F\left( n\right) $, then we can find a map $
f : M\rightarrow \bigvee_{i=1}^{n}S^{1}$ such that $f_{\ast }=\phi $. After
making the $f$ transverse to a non-wedge point $x_{i}$ on each $S^{1}$, $
F_i= f^{-1}\left( x_i\right) $ will give $n$ disjoint surfaces $F=\cup F_{i}$ with $
M\smallsetminus F$ connected. Hence one has the following elementary group-theoretic
characterization of $c\left( M\right)$, as in the closed case. 

\newtheorem*{jacothm}{Theorem (\cite[Theorem 2.1]{Ja72})}
\begin{jacothm}
The cut number $c\left( M\right) $ of $M$ is the maximal integer $n$ such that there is a surjective homomorphism $\phi : \pi _{1}\left( M\right) \twoheadrightarrow F\left(
n\right) $ onto the free group of rank $n$.
\end{jacothm}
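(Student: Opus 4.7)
The plan is to prove both inequalities in the claimed equivalence.

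For the direction $n \leq c(M)$ implies the existence of a surjection $\pi_{1}(M) \twoheadrightarrow F(n)$, I follow the construction sketched above the theorem statement: take $n$ components $F_{1}, \ldots, F_{n}$ from a surface realizing $c(M)$, use 2-sidedness to equip each with a product collar $F_{i} \times [-1, 1]$, and define $f : M \to \bigvee_{i=1}^{n} S^{1}$ by collapsing $M$ outside the collars to the wedge point and sending each collar once around $S^{1}_{i}$. Because $M \setminus F$ is connected, a short arc crossing $F_{i}$ once inside its collar (and missing every other $F_{j}$) can be closed up via a path in $M \setminus F$ to a loop in $\pi_{1}(M)$ whose image under $f_{*}$ is the $i$-th generator of $F(n)$, so $f_{*}$ is surjective.

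For the converse, given a surjection $\phi : \pi_{1}(M) \twoheadrightarrow F(n)$, I use that $\bigvee_{i=1}^{n} S^{1}$ is a $K(F(n), 1)$ to realize $\phi$ by a continuous map $f : M \to \bigvee_{i=1}^{n} S^{1}$, then smooth $f$ and make it transverse to a non-wedge point $x_{i} \in S^{1}_{i}$ for each $i$ (arranging transversality on $\partial M$ as well). The preimage $F_{i} = f^{-1}(x_{i})$ is then a compact, properly embedded surface which is $2$-sided by pullback of the transverse orientation of $x_{i}$ and orientable since $M$ is.

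The main obstacle is that $M \setminus F$ may fail to be connected, and each $F_{i}$ may have several components, so $F = \bigsqcup_{i} F_{i}$ must be trimmed to produce a witness to $c(M) \geq n$. Let $V$ denote the number of components of $M \setminus F$ and $E$ the total number of components of $F$, and form the dual graph $\Gamma$ with one vertex per component of $M \setminus F$ and one edge per component of $F$ joining the two vertices on either side of it. Then $\Gamma$ is connected, so $\pi_{1}(\Gamma) \cong F(E - V + 1)$. Construct a collapse $c_{\Gamma} : M \to \Gamma$ that sends each component of $M$ minus a regular neighborhood of $F$ to its vertex and wraps each collar once across the corresponding edge, together with $g : \Gamma \to \bigvee_{i=1}^{n} S^{1}$ sending an edge that came from a component of $f^{-1}(x_{i})$ once around $S^{1}_{i}$. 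Comparing signed intersection numbers with each $F_{i}$ shows that $f$ and $g \circ c_{\Gamma}$ induce the same homomorphism on $\pi_{1}$, and since $\bigvee S^{1}$ is aspherical they are freely homotopic. Hence $\phi = f_{*}$ factors through $g_{*} : F(E - V + 1) \twoheadrightarrow F(n)$, forcing $E - V + 1 \geq n$ by ranks of free groups. Removing from $F$ the $V - 1$ components corresponding to the edges of a spanning tree of $\Gamma$ then produces a subsurface $F' \subset F$ with $E - (V - 1) \geq n$ components and connected complement, proving $c(M) \geq n$.
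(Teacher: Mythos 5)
Your proof is correct, and it follows the same basic strategy the paper sketches in the paragraph preceding the theorem statement; note, however, that the paper does not actually prove this result but simply quotes it from Jaco (\cite[Theorem 2.1]{Ja72}). The interesting difference is that you correctly spot and repair a gap that the paper's informal sketch glosses over: the sketch asserts that after making $f$ transverse, ``$F_i=f^{-1}(x_i)$ will give $n$ disjoint surfaces $F=\cup F_i$ with $M\smallsetminus F$ connected,'' but there is no reason for $M\setminus F$ to be connected, nor for each $F_i$ to be connected. Your dual-graph argument --- letting $\Gamma$ have one vertex per component of $M\setminus F$ and one edge per component of $F$, homotoping $f$ to factor through the quotient map $M\to\Gamma$, concluding $\pi_1(\Gamma)\cong F(E-V+1)$ surjects onto $F(n)$ so $E-V+1\ge n$, and then deleting a spanning tree's worth of components of $F$ --- is the standard and correct way to finish; after deleting those $V-1$ components the dual graph becomes a wedge of $E-V+1$ loops, so the complement is connected and $c(M)\ge E-V+1\ge n$. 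One small stylistic point: the phrase ``comparing signed intersection numbers with each $F_i$ shows that $f$ and $g\circ c_\Gamma$ induce the same homomorphism on $\pi_1$'' is a little loose; the cleaner justification is that transversality furnishes a product collar $F\times(-\varepsilon,\varepsilon)$ on which $f$ crosses $x_i$ exactly once, while $f$ maps the complement of the collar into $\bigvee S^1\setminus\{x_1,\dots,x_n\}$, which deformation retracts to the wedge point; this retraction homotopes $f$ to a map that literally factors through $\Gamma$. With that phrasing tightened, the argument is complete.
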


The maximal rank of any free quotient of group is called its \textbf{co-rank}.   Hence the co-rank of $\pi_1(M)$ is the same as the cut number of $M$.
This is also referred to as the \textbf{inner rank} of $\pi_1(M)$ \cite{Lyndon} as well as the \textbf{non-commutative first Betti number} of $M$ \cite{AL86}. 
For each $g$, we construct an example of $3$-dimensional handlebody of genus $g$ whose groups maps onto a free group of rank $r(g)$ (roughly half of $g$) but no larger.  

\newtheorem*{mainthm}{Theorem~\ref{thm:bound}}
\begin{mainthm}
For each $g \geq 1$, there is a compact, connected, orientable $3$-dimensional homology handlebody $Y_g$ of genus $g$ with $c(Y_g) = r(g)$ where
$$
r(g) = \begin{cases}
\frac{g}{2} & \text{if $g$ is even}\\
\frac{g+1}{2} & \text{if $g$ is odd}.
\end{cases}
$$
\end{mainthm}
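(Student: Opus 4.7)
The strategy is to construct $Y_g$ as a boundary connected sum of copies of Thurston's tripus $T$ together with (when $g$ is odd) a single solid torus, and then pin down $c(Y_g)$ by combining the additivity of co-rank under free products with the computation $c(\pi_1(T)) = 1$ from Proposition~\ref{prop:tri}. Writing $g = 2k + \epsilon$ with $\epsilon \in \{0,1\}$, I would define
$$Y_g \;=\; \underbrace{T \,\natural\, \cdots \,\natural\, T}_{k \text{ copies}} \,\natural\, (S^1 \times D^2)^{\natural \epsilon},$$
where $\natural$ denotes boundary connected sum. Since $T$ is a genus-$2$ homology handlebody, $S^1 \times D^2$ is a genus-$1$ homology handlebody, and each joining disk is contractible, a repeated Mayer--Vietoris argument shows that $Y_g$ is itself a homology handlebody of genus $2k + \epsilon = g$.

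For the lower bound $c(Y_g) \geq r(g)$, I would use Seifert--van Kampen: $\pi_1$ of a boundary connected sum is the free product of the factors' fundamental groups, so $\pi_1(Y_g) \cong \pi_1(T)^{*k} * \mathbb{Z}^{*\epsilon}$. Projecting each tripus factor onto one $\mathbb{Z}$-summand of its abelianization $\mathbb{Z}^2$, and the solid-torus factor (if present) onto its $\mathbb{Z}$-abelianization, yields a surjection $\pi_1(Y_g) \twoheadrightarrow F(k + \epsilon) = F(r(g))$, which gives the lower bound via Jaco's characterization of the cut number.

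For the upper bound $c(Y_g) \leq r(g)$, I would invoke the additivity of co-rank (inner rank) under free products: for finitely generated groups $G_1, G_2$, one has $c(G_1 * G_2) = c(G_1) + c(G_2)$. Combined with $c(\pi_1(T)) = 1$ from Proposition~\ref{prop:tri} and $c(\mathbb{Z}) = 1$, this yields $c(\pi_1(Y_g)) \leq k + \epsilon = r(g)$. The main obstacle is precisely this step: either one cites the additivity statement from the literature (implicit in Lyndon's work on inner rank and appearing in the $3$-manifold setting for boundary connected sums), or one proves it directly by taking any surjection $\pi_1(Y_g) \twoheadrightarrow F(n)$, realizing it by $n$ disjoint properly embedded two-sided cutting surfaces in $Y_g$, isotoping them to meet each boundary-connect-sum disk transversely, and decomposing the configuration into cutting surfaces inside each summand so that the tripus bound $c(T)=1$ can be applied factor-by-factor.
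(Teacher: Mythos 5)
Your proof is correct, and it takes a slightly different — and more economical — route than the paper's.  The paper's proof uses three base cases: a genus-$1$ handlebody, the tripus $T$ for $g = 2$, and a separately constructed genus-$3$ string link complement $Y$ with $c(Y) = 2$ (Proposition~\ref{prop:g3}), then inducts via $Y_g = Y_{g-2} \natural T$ using Jaco's additivity of the cut number under boundary connected sum (\cite[Theorem 3.2]{Ja72}).  You instead set $Y_g = T^{\natural k} \natural (S^1 \times D^2)^{\natural \epsilon}$ where $g = 2k+\epsilon$, so that for odd $g$ the only input beyond Proposition~\ref{prop:tri} is the trivial fact that $c(S^1 \times D^2) = 1$; this renders Proposition~\ref{prop:g3}, with its Fox-calculus presentation matrices and Magma verifications, unnecessary for the theorem.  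The key step in both proofs is exactly the same additivity result, which you correctly identify (it is Jaco's theorem in the $3$-manifold setting, equivalently additivity of inner rank under free products à la Lyndon), so there is no gap.  What the paper's longer route buys is the genus-$3$ example $Y$ itself: unlike $T \natural (S^1 \times D^2)$, whose boundary is compressible because the meridian disk of the solid torus survives the sum, the string link complement is presumably a more ``genuine'' boundary-irreducible specimen, and the authors likely wished to exhibit it independently.  One small cosmetic point: your separate lower-bound argument (building a surjection onto $F(r(g))$ by projecting each factor to $\mathbb{Z}$) is harmless but redundant, since the additivity theorem is an equality and already gives both bounds once $c(T) = 1$ and $c(S^1\times D^2)=1$ are known.
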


We expect that this may be optimal.  

\begin{question} If $Y$ is a compact, connected, orientable $3$-dimensional homology handlebody of genus $g$, is $c(Y)\geq r(g)$?
\end{question}

\subsection*{Acknowledgements} The first author would like to thank Michael Freedman for asking her about the cut number of homology handlebodies.  We would like to thank John Hempel, Neil Fullarton, and Alan Reid for helpful conversations.


\section{Main Theorem}\label{sec:bound}

The proof of Theorem~\ref{thm:bound} is almost immediate once we construct examples of homology handlebodies $Y_2$ and $Y_3$ of genus $2$ and $3$ repectively with $c(Y_2)=1$ and $c(Y_3)=2$.  We let $Y_2$ will be Thurston's tripus manifold $T$; we prove $\pi_1(T)$ is not very large in Subsection~\ref{sec:tri}.  We let $Y_3 = Y$, a particular genus-$3$ string link complement; we construct $Y$ and prove that $c(Y)=2$ in Subsection~\ref{sec:g3}.

\begin{theorem}
For each $g \geq 1$, there is a compact, connected, orientable $3$-dimensional homology handlebody $Y_g$ of genus $g$ with $c(Y_g) = r(g)$ where
$$
r(g) = \begin{cases}
\frac{g}{2} & \text{if $g$ is even}\\
\frac{g+1}{2} & \text{if $g$ is odd}.
\end{cases}
$$
\label{thm:bound}
\end{theorem}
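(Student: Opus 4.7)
The plan is to take the computations $c(Y_2) = 1$ and $c(Y_3) = 2$ (established in the later subsections for Thurston's tripus $T$ and the genus-$3$ string link complement $Y$, respectively) as base cases, and to assemble every remaining $Y_g$ by iterated boundary-connected sum with the solid torus $Y_1 = S^1 \times D^2$. Concretely, for $g \geq 4$ I would define
\[
Y_g = \begin{cases} \underbrace{Y_2 \natural \cdots \natural Y_2}_{g/2\ \text{copies}} & g\ \text{even}, \\[1ex] Y_3 \,\natural\, \underbrace{Y_2 \natural \cdots \natural Y_2}_{(g-3)/2\ \text{copies}} & g\ \text{odd}. \end{cases}
\]

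First I would verify that $Y_g$ is a compact, connected, orientable homology handlebody of genus $g$: a routine Mayer--Vietoris argument on the separating disc of a boundary-connected sum shows that, for homology handlebodies, genus is additive under $\natural$.

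Next I would establish cut-number additivity under boundary-connect sum: $c(M_1 \natural M_2) = c(M_1) + c(M_2)$. The inequality ``$\geq$'' is elementary, since $\pi_1(M_1 \natural M_2) \cong \pi_1(M_1) * \pi_1(M_2)$ and a pair of surjections onto free groups free-products to a surjection onto the free product of those free groups, which is again free of rank the sum. The reverse inequality is a consequence of Grushko's theorem: any surjection $A * B \twoheadrightarrow F(n)$ is, after an inner automorphism of $F(n)$, a free product of surjections $A \twoheadrightarrow F(n_A)$ and $B \twoheadrightarrow F(n_B)$ with $n_A + n_B = n$, forcing $n \leq \operatorname{corank}(A) + \operatorname{corank}(B)$.

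With additivity in hand the theorem is immediate by addition. For $g = 2k$ even one gets $c(Y_g) = k \cdot c(Y_2) = g/2 = r(g)$; for $g = 2k+1$ odd with $k \geq 1$ one gets $c(Y_g) = c(Y_3) + (k-1)\cdot c(Y_2) = k + 1 = (g+1)/2 = r(g)$; and the small cases $g = 1$ (solid torus, $c = 1$), $g = 2$, and $g = 3$ are base cases handled directly. The substantive mathematical content of the theorem, and the main obstacle, is the verification of $c(Y_2) = 1$ and $c(Y_3) = 2$, which is deferred to the later subsections; the assembly above is formal.
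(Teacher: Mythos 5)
Your proof is correct and follows essentially the same approach as the paper: assemble $Y_g$ from boundary-connected sums of the base cases $Y_2$ and $Y_3$ (the paper writes this as the induction $Y_g = Y_{g-2}\natural Y_2$, which unrolls to your formula) and invoke additivity of the cut number under $\natural$. The only cosmetic difference is that you re-derive cut-number additivity from Grushko's theorem, whereas the paper simply cites Theorem 3.2 of \cite{Ja72}.
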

\begin{proof}

For each integer $g \geq 1$ we shall construct a homology handlebody $Y_g$ of genus $g$ with $c(Y_g) = r(g)$.  Because $r(1) = 1$, we may choose $Y_1$ to be a handlebody.  Let $Y_2 = T$ and $Y_3 = Y$, where $T$ is Thurston's Tripus manifold (see Figure~\ref{fig:tripus}) and $Y$ is the string link complement from Figure~\ref{fig:g3}.  Then by Propositions \ref{prop:tri} and \ref{prop:g3}, $c(Y_2) = 1 = r(2)$ and $c(Y_3) = 2 = r(3)$.  If $g \geq 4$, we define $Y_g$ inductively as the boundary connected sum $Y_g = Y_{g-2} \natural Y_2$.  By Theorem 3.2 of \cite{Ja72}, the cut number is additive under boundary connected sum and so $c(Y_g) = c(Y_{g-2}) + 1$.  Since it is also the case that $r(g) = r(g-2) + 1$, it follows by induction that $c(Y_g) = r(g)$ for all $g \geq 0$.
\end{proof}

\subsection{Preliminary notions and lemmas}
Before we prove that $c(Y_2)=1$ and $c(Y_3)=2$, we shall determine some obstructions to a group having a non-abelian free quotient.  To do this, we look at the first homology (and relative first homology) of free abelian covering spaces of the manifolds.  We note that the co-rank is known to be algorithmically computable \cite{Ma82, Ra95} for finitely presented groups;  however, the algorithm seems difficult to use in practice.   

Let $X$ be a path connected topological space and $\phi:\pi_1(X) \rightarrow \Gamma$ be a surjective group homomorphism.  We define $X_\phi \xrightarrow{\pi} X$ to be the covering space of $X$ corresponding to $\phi$.  Recall that the group of deck translations of $X_\phi \rightarrow X$ is identified with $\Gamma$ making $H_1(X_\phi)$ into a left $\mathbb{Z}\Gamma$-module.  Moreover, for any group $G$ and surjective homomorphism $\phi:G \rightarrow \Gamma$, $\frac{Ker(\phi)}{[Ker(\phi),Ker(\phi)]}$ has the structure of a left $\mathbb{Z}\Gamma$-module where the module action is given as follows.  Let $\gamma \in \Gamma$ and $g\in Ker(\phi)$.  Then $\gamma \left[g\right] := \left[hgh^{-1}\right]$ for any $h\in G$ such that $\phi(h)=\gamma$ (here $\left[g\right]$ denotes the equivalence class of $g$).  In the case that $G=\pi_1(X)$, we have that $H_1(X_\phi)$ is isomorphic to $\frac{Ker(\phi)}{[Ker(\phi),Ker(\phi)]}$ as left $\mathbb{Z}\Gamma$-modules.

\begin{lemma}\label{lem:summand} Let $G$ be a group and $\Gamma = \mathbb{Z}^m$ with $m \in \{1,2\}$.  If $G$ is very large, then there exists a surjective homomorphism $\phi: G \rightarrow \Gamma$ such that 
$$ \frac{Ker(\phi)}{[Ker(\phi),Ker(\phi)]}  \cong  \mathbb{Z}\Gamma \oplus A$$ for some left $\mathbb{Z}\Gamma$-module $A$. In particular, if $M$ is a compact, connected, orientable $3$-manifold with $\pi_1(M)$ very large, then there is a surjective homomorphism $\phi: \pi_1(M) \rightarrow \Gamma$ such that $H_1(M_\phi)$ has a $\mathbb{Z}\Gamma$ summand. 
\end{lemma}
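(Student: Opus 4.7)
The plan is to reduce the question to computing the first homology of an abelian cover of a wedge of circles, and then to invoke projectivity of $\mathbb{Z}\Gamma$. Suppose $G$ is very large, so there is a surjection $\psi : G \twoheadrightarrow F(n)$ with $n \geq 2$. Choose a free basis $x_1, \dots, x_n$ for $F(n)$ and define $\pi : F(n) \twoheadrightarrow \Gamma = \mathbb{Z}^m$ by sending $x_1, \dots, x_m$ to the standard generators of $\mathbb{Z}^m$ (possible since $n \geq 2 \geq m$) and $x_i \mapsto 0$ for $i > m$. Set $\phi = \pi \circ \psi$, $K = \ker(\phi)$, and $L = \ker(\pi)$. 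Because $\psi$ is surjective, it restricts to a surjection $K \twoheadrightarrow L$ with kernel $\ker(\psi) \subseteq K$, and this restriction intertwines the conjugation actions by $\Gamma$ that define the $\mathbb{Z}\Gamma$-module structures on $K/[K,K]$ and $L/[L,L]$ described immediately before the lemma. The induced map
$$ \Psi : K/[K,K] \twoheadrightarrow L/[L,L] $$
is therefore a surjective $\mathbb{Z}\Gamma$-module homomorphism.

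The heart of the argument is to produce a free $\mathbb{Z}\Gamma$-summand inside $L/[L,L]$. Identify $F(n) = \pi_1(W_n)$ where $W_n = \bigvee_{i=1}^n S^1$, and let $\widetilde{W}_n \to W_n$ be the connected cover corresponding to $\pi$. Then $L/[L,L] \cong H_1(\widetilde{W}_n)$ as $\mathbb{Z}\Gamma$-modules, and the cellular chain complex of $\widetilde{W}_n$ has the form
$$ 0 \longrightarrow \mathbb{Z}\Gamma^n \xrightarrow{\ \partial\ } \mathbb{Z}\Gamma \longrightarrow 0, \qquad \partial(e_i) = \pi(x_i) - 1. $$
Since $\partial(e_i) = 0$ for $i > m$, the edges $e_{m+1}, \dots, e_n$ contribute a free summand of rank $n-m$ to $\ker(\partial)$, and what remains is the syzygy module of $\pi(x_1) - 1, \dots, \pi(x_m) - 1$. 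For $m = 1$ this module is zero because $t-1$ is a non-zero-divisor in $\mathbb{Z}[t^{\pm 1}]$. For $m = 2$, a short calculation in the UFD $\mathbb{Z}[t_1^{\pm 1}, t_2^{\pm 1}]$ — using that $t_1 - 1$ and $t_2 - 1$ are coprime — will show that the syzygy module is free of rank one, generated by the Koszul-type relation $(t_2 - 1,\, -(t_1 - 1))$. In either case, since $n \geq 2$, the module $L/[L,L]$ contains a $\mathbb{Z}\Gamma$ direct summand.

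To finish, compose $\Psi$ with the projection onto such a summand to obtain a $\mathbb{Z}\Gamma$-module surjection $K/[K,K] \twoheadrightarrow \mathbb{Z}\Gamma$. Since $\mathbb{Z}\Gamma$ is free and hence projective, this map splits, giving a decomposition $K/[K,K] \cong \mathbb{Z}\Gamma \oplus A$ for some left $\mathbb{Z}\Gamma$-module $A$. The topological addendum follows by taking $G = \pi_1(M)$ and invoking the identification $H_1(M_\phi) \cong K/[K,K]$ of $\mathbb{Z}\Gamma$-modules recalled just before the lemma. The main obstacle I anticipate is the syzygy calculation in the $m=2$ case; the rest of the argument is formal, resting only on the right-exactness of abelianization and the projectivity of $\mathbb{Z}\Gamma$.
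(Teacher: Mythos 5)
Your proposal is correct and follows essentially the same route as the paper: compose a surjection onto a free group with a surjection onto $\Gamma$, observe that the induced map on relation modules is a surjective $\mathbb{Z}\Gamma$-module homomorphism onto the relation module of a free group, and split using projectivity of $\mathbb{Z}\Gamma$. The paper simply reduces immediately to $F(2)$ (so that $\ker(\pi)/[\ker(\pi),\ker(\pi)]\cong\mathbb{Z}\Gamma$ outright) and states this isomorphism without the chain-complex and Koszul-syzygy verification you supply.
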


\begin{proof}Since $G$ is very large, there is a surjective homomorphism $\psi^\prime: G \twoheadrightarrow F$ where $F$ is the free group of rank $2$. Let $\psi:F \twoheadrightarrow \Gamma$ be a surjective homomorphism and $\phi = \psi \circ \psi^\prime$.  Then $\psi$ induces a surjective $\mathbb{Z}\Gamma$-module homomorphism $$\psi: \frac{Ker(\phi)}{[Ker(\phi),Ker(\phi)]} \twoheadrightarrow \frac{Ker(\psi)}{[Ker(\psi),Ker(\psi)]}.$$  Since $\frac{Ker(\psi)}{[Ker(\psi),Ker(\psi)]}   \cong  \mathbb{Z}\Gamma$ is a free $\mathbb{Z}\Gamma$ module, the result follows. 
\end{proof}

We note that in the previous proof that if $m\geq 3$ then $\frac{Ker(\psi)}{[Ker(\psi),Ker(\psi)]}$ is no longer a free module; in fact, it is not even projective.  Thus, to generalize the previous lemma, we work with relative homology. Given $p \in M$, let $p_{\phi}$ denote the preimage $\pi^{-1}(p) \subset M_{\phi}$.  The relative homology group $H_1(M_{\phi},p_{\phi})$ has the structure of a left $\mathbb{Z}\Gamma$-module as before, coming from group of deck translations acting on the pair $(M_{\phi},p_{\phi})$.  We have the following generalization of Lemma \ref{lem:summand}, which no longer restricts us to using free abelian covering spaces.
\begin{lemma}\label{lem:relative}
Let $M$ be a compact, connected, orientable $3$-manifold, let $p \in M$, and let $\Gamma$ be a quotient of $F(n)$, the non-abelian free group of rank $n$ for some $n \leq c(M)$.  Then there is a surjective homomorphism $\phi: \pi_1(M) \rightarrow \Gamma$ such that
	\[H_1(M_{\phi},p_\phi) \cong \mathbb{Z}\Gamma^n \oplus A \] 
	for some left $\mathbb{Z}\Gamma$-module $A$.
\end{lemma}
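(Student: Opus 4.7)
The plan is to mimic the strategy of Lemma~\ref{lem:summand}, namely to factor the free quotient through $\Gamma$ and then exploit freeness of $\mathbb{Z}\Gamma^n$ to split off a summand, but this time at the level of a map of pairs rather than at the purely algebraic level. First, since $n \leq c(M)$, the theorem of Jaco stated above gives a surjection $\psi' \colon \pi_1(M) \twoheadrightarrow F(n)$. Let $\psi \colon F(n) \twoheadrightarrow \Gamma$ be the given quotient and set $\phi = \psi \circ \psi'$, so $\phi$ is surjective. Realize $\psi'$ by a continuous map $f \colon (M,p) \to (X,q)$, where $X = \bigvee_{i=1}^n S^1$ and $q$ is the wedge point; such an $f$ exists because $X$ is a $K(F(n),1)$ and we may arrange $f(p) = q$.

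Next, I would pass to covers. The covering $X_\psi \to X$ is a graph whose vertex set is (identified with) $\Gamma$ and whose edges at a vertex $\gamma$ correspond to the $n$ generators of $F(n)$; note that $q_\psi$ is exactly the vertex set. Composing with $\phi$, one obtains a $\Gamma$-equivariant map of pairs $\tilde{f} \colon (M_\phi, p_\phi) \to (X_\psi, q_\psi)$, inducing a $\mathbb{Z}\Gamma$-linear homomorphism on relative first homology. From the cellular chain complex of the pair $(X_\psi, q_\psi)$ (with $C_0 / C_0(q_\psi) = 0$ and $C_1$ the free $\mathbb{Z}\Gamma$-module on the $n$ edges at the identity vertex), one reads off
\[
H_1(X_\psi, q_\psi) \;\cong\; \mathbb{Z}\Gamma^n.
\]

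The crux is then to show that $\tilde{f}_* \colon H_1(M_\phi, p_\phi) \to \mathbb{Z}\Gamma^n$ is surjective. For each generator $x_i$ of $F(n)$, pick $\gamma_i \in \pi_1(M)$ with $\psi'(\gamma_i) = x_i$, represented by a loop $\alpha_i$ at $p$; lift $\alpha_i$ to a path $\tilde{\alpha}_i$ in $M_\phi$ starting at a chosen lift $\tilde{p}$, yielding a class $[\tilde{\alpha}_i] \in H_1(M_\phi, p_\phi)$. Because $f \circ \alpha_i$ is homotopic rel $q$ to the $i$-th loop $\beta_i$ of $X$, unique path lifting sends $\tilde{f}[\tilde{\alpha}_i]$ to the homology class of the lifted edge of $\beta_i$ based at $\tilde{q} = \tilde{f}(\tilde{p})$. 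These lifted edges are precisely a free $\mathbb{Z}\Gamma$-basis of $H_1(X_\psi, q_\psi)$, so $\tilde{f}_*$ hits a generating set and is therefore surjective.

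Finally, since $\mathbb{Z}\Gamma^n$ is a free (hence projective) $\mathbb{Z}\Gamma$-module, the surjection $\tilde{f}_*$ splits, giving
\[
H_1(M_\phi, p_\phi) \;\cong\; \mathbb{Z}\Gamma^n \oplus A,
\]
where $A = \ker(\tilde{f}_*)$, as desired. The main technical point I anticipate is verifying that the basepoints and lifts can be coherently chosen so that $\tilde{f}$ is genuinely $\Gamma$-equivariant and so that the $[\tilde{\alpha}_i]$ map to the standard $\mathbb{Z}\Gamma$-basis of $H_1(X_\psi, q_\psi)$; this is essentially bookkeeping with path lifting but must be set up carefully because $\psi'$ need not have a set-theoretic section compatible with the group structure.
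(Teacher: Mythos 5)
Your proof is correct, and the overall outline matches the paper's: factor $\phi=\psi\circ\psi'$ through $F(n)$, realize $\psi'$ geometrically by a map $f\colon(M,p)\to(W,w)$ to the $n$-fold wedge of circles $W$, lift to a $\Gamma$-equivariant map of pairs $(M_\phi,p_\phi)\to(W_\psi,w_\psi)$, identify $H_1(W_\psi,w_\psi)\cong\mathbb{Z}\Gamma^n$ from the cellular chain complex, prove the induced map on relative $H_1$ is onto, and split using projectivity of $\mathbb{Z}\Gamma^n$. Where you diverge is in the proof of surjectivity: you produce explicit loops $\alpha_i$ in $M$ with $\psi'[\alpha_i]=x_i$, lift them to relative $1$-cycles in $(M_\phi,p_\phi)$, and use path/homotopy lifting to see that their images are the standard $\mathbb{Z}\Gamma$-basis of $H_1(W_\psi,w_\psi)$. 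The paper instead runs a diagram chase on the ladder of long exact sequences of the two pairs, noting that $g_*$ is surjective on absolute $H_1$ (because $\psi'$ is onto), surjective on $H_0(p_\phi)\to H_0(w_\psi)$, and an isomorphism on $H_0$ of the total spaces, which forces surjectivity on the relative $H_1$ terms. Your argument is more concrete and exhibits a generating set; the paper's is a clean four-lemma-style chase that avoids choosing lifts or coset representatives. The caveat you raise at the end is not an actual gap: the lift $\tilde f$ exists and is automatically $\Gamma$-equivariant once one basepoint lift $\tilde p\mapsto\tilde q$ is fixed (standard covering-space theory, using $\phi=\psi\circ f_*$), and the identity $\tilde f_*[\tilde\alpha_i]=[\tilde\beta_i]$ follows from uniqueness of path and homotopy lifting; no set-theoretic section of $\psi'$ compatible with the group law is required.
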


\begin{proof}Since $c(M) \geq n$, there is an epimorphism $\psi^\prime:\pi_1(M,p) \twoheadrightarrow F(n)$, the free group of rank $n$.  Letting $W$ denote the $n$-fold wedge of circles and $w\in W$ the wedge point, there exists a map $f:M \rightarrow W$ such that $f(p) = w$ and such that the induced map on $\pi_1$ is $\psi^\prime$.  Let 
$\psi: \pi_1(W,w) \rightarrow \Gamma$ be a surjective map and $\phi = \psi \circ \psi^\prime$.    The map $f$ can be lifted to a map $g:M_{\phi} \rightarrow W_\psi$ sending $p_\phi$ to $w_\psi$.

We examine the following commutative diagram of modules and module homomorphisms over $\mathbb{Z}\Gamma$, where the rows are portions of the long exact sequences of the pairs and all of the vertical maps are the module homomorphisms on homology induced by $g$.

$$
\begin{tikzcd}
H_1(M_{\phi}) \arrow[d, "g_1"] \arrow[r] & H_1(M_{\phi},p_{\phi})  \arrow[d,"g_2"] \arrow[r] & H_0(p_{\phi})  \arrow[d,"g_3"] \arrow[r] & H_0(M_{\phi}) \arrow[d,"g_4"]\\
H_1(W_{\psi}) \arrow[r] & H_1(W_{\psi},w_\psi) \arrow[r] & H_0(w_\psi) \arrow[r] & H_0(W_\psi)
\end{tikzcd}
$$

Note first that $g_4$ is an isomorphism and $g_3$ is an epimorphism.  Also, since $\psi^\prime$ is an epimorphism it follows that $g_1$ is as well.  Together these facts imply that $g_2$ is an epimorphism as well.  Since $H_1(W_\psi,w_\psi)$ is a free module of rank $n$, the result follows. 

\end{proof}

The following easy lemma is most likely well-known but we include it for completeness.  This will be used to prove that the modules arising in the proofs Propositions \ref{prop:tri} and \ref{prop:g3} are torsion free. 

\begin{lemma}\label{lem:gcd}Suppose $R$ is a unique factorization domain and $M$ is a left module over $R$ with a presentation of the form 
	$$\left<\alpha_1, \ldots, \alpha_n \left|\right. p_1 \alpha_1 + \ldots p_n \alpha_n\right>$$ where $p_k\in R$ for each $k$.  Then:
	\begin{enumerate}[(i)]
	\item If $p_i$ and $p_j$ are relatively prime for some $i \neq j$, then $M$ is torsion-free.
	\item If there is a common factor different than $1$ which is mutually shared by all of $p_1, \ldots, p_n$, then $M$ is not torsion-free.
	\end{enumerate}
	In particular, for $n = 2$, $M$ is torsion free if and only if $p_1$ and $p_2$ are relatively prime.
\end{lemma}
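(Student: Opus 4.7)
My plan is to view $M$ concretely as the cokernel $R^n / R\rho$ where $\rho = p_1\alpha_1 + \cdots + p_n\alpha_n$, so that an element $\alpha = \sum_k a_k\alpha_k$ of the free module is zero in $M$ iff $a_k = cp_k$ for all $k$ (with a common $c \in R$), and is torsion in $M$ iff there exist $r \in R \smallsetminus \{0\}$ and $s \in R$ with $ra_k = sp_k$ for all $k$. With these two criteria in hand, both parts reduce to elementary manipulations in the UFD $R$.

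For (i), I would assume without loss of generality that $p_1$ and $p_2$ are coprime. If one of them is a unit then $M$ is free on $n - 1$ of the generators and there is nothing to prove, so assume both are non-units (and hence both nonzero). Given a witness to torsion with $r \neq 0$, the key step is to combine $ra_1 = sp_1$ and $ra_2 = sp_2$ to obtain $ra_1p_2 = ra_2p_1$, cancel $r$ using that $R$ is a domain, and then use coprimeness of $p_1,p_2$ to conclude $p_1 \mid a_1$. Writing $a_1 = bp_1$, back-substitution forces $a_2 = bp_2$ and $s = rb$, and then the remaining relations $ra_k = sp_k = rbp_k$ give $a_k = bp_k$ for every $k$. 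Hence $\alpha = b\rho$ is zero in $M$.

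For (ii), assuming (as is the nontrivial case) that not all $p_k$ vanish, let $d$ be a non-unit common divisor and write $p_k = dq_k$. The natural choice is $\alpha := \sum_k q_k\alpha_k$: then $d\alpha = \rho = 0$ in $M$, so it only remains to verify $\alpha \neq 0$ in $M$. If instead $q_k = cp_k = cdq_k$ for all $k$, then picking any $k$ with $q_k \neq 0$ and cancelling in the domain $R$ yields $cd = 1$, contradicting that $d$ is a non-unit.

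The final sentence of the lemma, concerning $n = 2$, then falls out immediately by combining (i) and (ii), since two elements of a UFD are either coprime or share a non-unit common factor. I do not anticipate any real obstacle; the only care needed is in the degenerate cases where some $p_k$ is zero or a unit, but in each such case $M$ reduces to a free module and the stated conclusions continue to hold.
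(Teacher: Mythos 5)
Your proof is correct and follows essentially the same elementary approach as the paper's: realize $M$ as the cokernel $R^n/R\rho$, interpret the torsion condition as a system of equations $ra_k = sp_k$ in $R$, and use UFD arithmetic (cancellation, and the fact that $p_1\mid a_1p_2$ together with $\gcd(p_1,p_2)=1$ forces $p_1\mid a_1$) to conclude the element was already zero. The paper organizes the computation slightly differently --- it first reduces $r$ and $s$ to a coprime pair and then deduces that $s$ is a common non-unit divisor of every $p_k$, arguing by contrapositive --- whereas you keep $r,s$ as given, cross-multiply to eliminate them, and argue directly that the element lies in $R\rho$; the two routes are interchangeable UFD manipulations. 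You are also more explicit about the degenerate cases and you spell out part (ii), which the paper simply declares obvious; that is fine, though I'd note that your closing remark that ``the stated conclusions continue to hold'' in every degenerate case is not quite right if one allows $p_1=\cdots=p_n=0$ (then $M\cong R^n$ is free and torsion-free even though every non-unit is a common factor). This edge case is excluded by your hypothesis ``not all $p_k$ vanish'' and is irrelevant to the paper's applications, but the parenthetical claim should be softened.
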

	
\begin{proof}
		Statement $(ii)$ is obvious.  We proceed to prove statement $(i)$.  Suppose $M$ has a torsion element, $f_1 \alpha_1 + \ldots + f_n\alpha_n \neq 0$ with $f_k \in R$ for each $k$.  Then there exists $r,s \in R$ (with $s$ non-zero and not a unit) such that $s(f_1 \alpha_1 + \ldots + f_n\alpha_n) = r( p_1 \alpha_1 + \ldots p_n \alpha_n)$ in the free $R$ module generated by $\alpha$ and $\beta$. Hence $sf_k = rp_k$ for each $k$.  We can assume that $s$ and $r$ have no common factors, otherwise, we could reduce them.  So $s$ much divide $p_k$ for each $k$.  Hence no pair $p_i$, $p_j$ can be relatively prime.
\end{proof}


\subsection{Thurston's tripus manifold has cut number equal to 1}\label{sec:tri}

Let $T$ be W. Thurston's tripus manifold.   That is, $T$ is the complement of the three arcs in $S^2 \times I$ as shown in Figure~\ref{fig:tripus}.  Smoothly embed $S^2 \times I$ in $S^3$.  Then we see that $T$ is the complement of an  embedding of a $\theta$ graph in $S^3$ (where the exterior of $S^2 \times I$ can be identified with with the two vertices of the spatial graph). Note that the tripus is a hyperbolic manifold with totally geodesic boundary and is also a genus $2$ homology handlebody.

	\begin{figure}
	\includegraphics{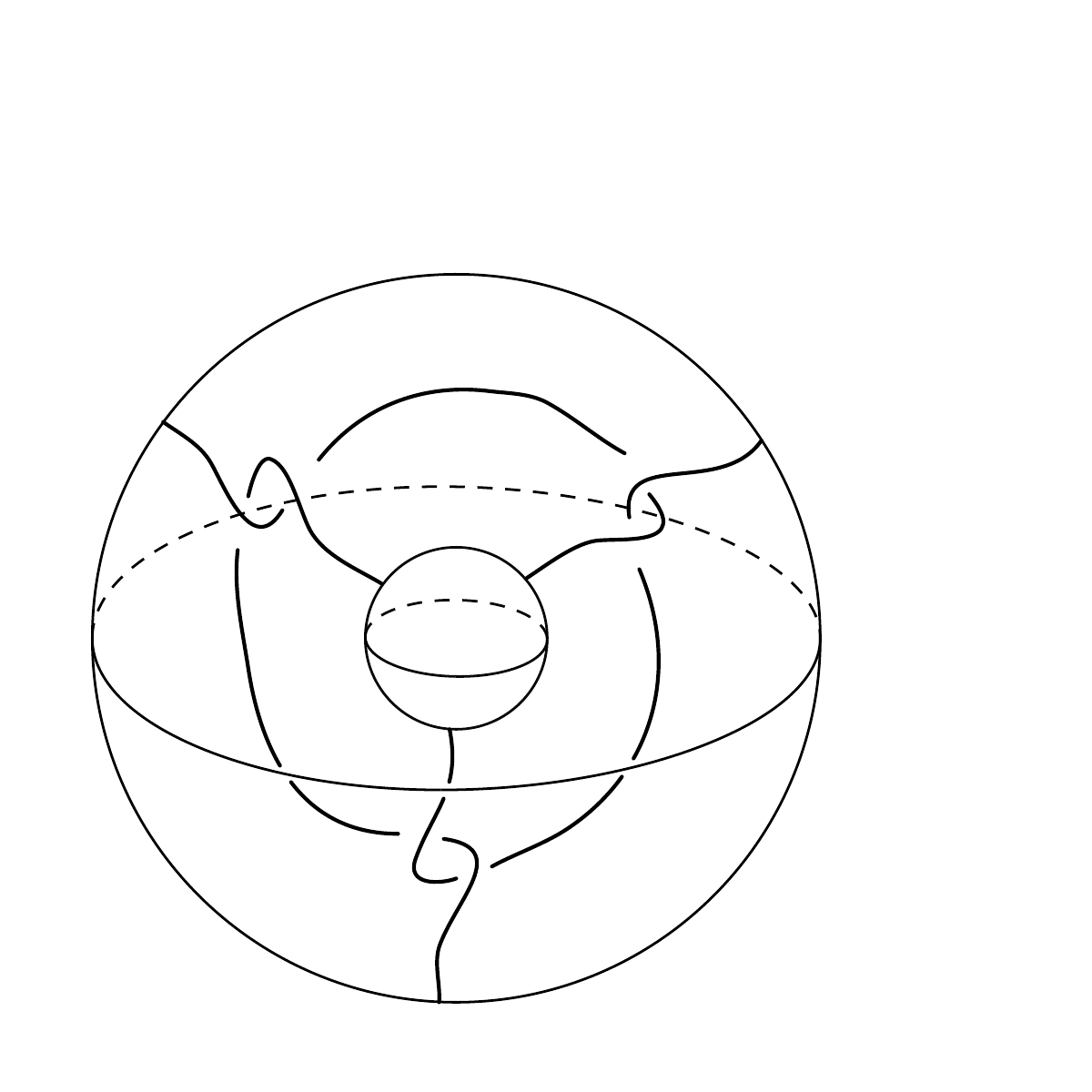}
	\caption{Thurston's Tripus, a hyperbolic manifold with totally geodesic boundary}
	\label{fig:tripus}
	\end{figure}

\begin{proposition}
Let $T$ be W. Thurston's Tripus manifold.  Then $\pi_1(T)$ is a weakly parafree group that is large but not very large. 
\label{prop:tri}
\end{proposition}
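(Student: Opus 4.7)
The plan is to verify the three claims of Proposition~\ref{prop:tri} in turn. Since $T$ is a genus-$2$ homology handlebody, Stallings' theorem immediately yields that $\pi_1(T)$ is weakly parafree of rank $2$. Since $T$ is hyperbolic with totally geodesic boundary, it is irreducible with non-empty incompressible boundary and is certainly not finitely covered by $T^2\times I$, so by Cooper-Long-Reid $\pi_1(T)$ is large. The substantive task is to show $\pi_1(T)$ is not very large, which I will establish via the contrapositive of Lemma~\ref{lem:summand} applied with $\Gamma = \mathbb{Z}^2$; throughout I write $\Lambda := \mathbb{Z}[\mathbb{Z}^2] = \mathbb{Z}[s^{\pm 1}, t^{\pm 1}]$.

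Because $H_1(T;\mathbb{Z}) \cong \mathbb{Z}^2$, every surjection $\phi\colon \pi_1(T) \twoheadrightarrow \mathbb{Z}^2$ factors through the abelianization map and so differs from a fixed canonical abelianization $\phi_0$ only by an automorphism of $\mathbb{Z}^2$. Such an automorphism merely relabels the generators of $\Lambda$ and preserves the isomorphism class of $H_1(T_\phi)$ as a $\Lambda$-module, so it suffices to rule out a $\Lambda$-summand in $H_1(T_{\phi_0})$ for this single $\phi_0$. To compute it I will use the $3$-generator, $1$-relator presentation $\pi_1(T) = \langle m_1, m_2, m_3 \mid r \rangle$ arising from the $\theta$-graph structure of $T$, in which the $m_i$ are meridians of the three edges and $r$ is one of the two vertex relations (the other being a consequence). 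Fox calculus then presents $H_1(T_{\phi_0})$ as the middle homology of
\[ \Lambda \xrightarrow{(p_1,\, p_2,\, p_3)^{T}} \Lambda^3 \xrightarrow{(\phi_0(m_1) - 1,\; \phi_0(m_2) - 1,\; \phi_0(m_3) - 1)} \Lambda, \]
where $p_i := \phi_0(\partial r / \partial m_i) \in \Lambda$.

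By \cite[Lemma 2.12]{COT} this $\Lambda$-module has rank exactly $1$. A rank-$1$ $\Lambda$-module has a $\Lambda$-summand if and only if its quotient by the torsion submodule is free of rank $1$, equivalently, isomorphic to a principal ideal of $\Lambda$. The goal is therefore to identify $H_1(T_{\phi_0})/\operatorname{tor}$ with a \emph{non-principal} ideal of $\Lambda$ (a natural candidate being the augmentation ideal $(s-1,\, t-1)$); this precludes the existence of a $\Lambda$-summand, and Lemma~\ref{lem:summand} then delivers the conclusion that $\pi_1(T)$ is not very large.

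The principal obstacle is the explicit computation in the middle: reading off the specific relator $r$ from Thurston's tripus embedding (as opposed to an unknotted $\theta$-graph, whose complement is a genus-$2$ handlebody with free fundamental group, hence very large) and then tracking the three Fox derivatives $p_1, p_2, p_3$ carefully enough to pin down the isomorphism type of $H_1(T_{\phi_0})$ as a $\Lambda$-module. Once this identification is in hand, Lemma~\ref{lem:gcd} applied to the $p_i$, together with a routine analysis of the torsion-free quotient, will complete the argument.
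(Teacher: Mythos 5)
Your overall strategy matches the paper's: reduce to the single abelianization cover (justified by $H_1(T)\cong\mathbb Z^2$), compute $H_1$ of the $\mathbb Z^2$-cover as a $\Lambda$-module, and rule out a $\Lambda$-summand so that Lemma~\ref{lem:summand} gives the contradiction. Your reformulation — a rank-$1$ module splits off a $\Lambda$-summand iff its torsion-free quotient is free of rank $1$ — is correct and equivalent to what the paper does (show the module is torsion-free yet not free).

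However, there is a genuine gap, and you acknowledge it yourself: you never write down the relator $r$ for Thurston's specific embedding of the $\theta$-graph, never compute the Fox derivatives, and never actually verify that the resulting module fails to be free. The proposed identification of the torsion-free quotient with the augmentation ideal is explicitly flagged as a guess (``a natural candidate''), not a computation. But this computation is the entire content of the proposition: an unknotted $\theta$-graph complement is a genus-$2$ handlebody with free (hence very large) group, so everything hinges on how Thurston's particular embedding changes the module. The paper carries this out concretely: it extracts a Wirtinger presentation with six generators, reduces to four generators and two relations, passes to generators $B=ba^{-1}$, $D=dc^{-1}$ and the commutator lift $\Theta$, row/column-reduces the $2\times 3$ presentation matrix down to the $1\times 2$ matrix $[\,ac+a-1,\ 2a^2c\,]$, applies Lemma~\ref{lem:gcd} to conclude the module is torsion-free, and then tensors with $\mathbb Z_2[a^{\pm1},c^{\pm1}]$ — where the presentation becomes $[\,ac+a+1,\ 0\,]$ — to see it is not free. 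Your sketch has none of this and therefore does not yet prove the claim; it identifies the right machinery but stops exactly where the work begins. A minor secondary point: you assert a $3$-generator, $1$-relator presentation exists from the $\theta$-graph structure, which is plausible on Euler-characteristic grounds but unsubstantiated; the paper's reduced presentation has four generators and two relators, and you would need to justify (or perform the additional Tietze reduction to obtain) the presentation you want before applying Fox calculus.
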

	\begin{proof} Since $T$ is a hyperbolic manifold with totally geodesic boundary \cite[Chapter 3]{WTbook}, it is irreducible and has incompressible boundary.  Thus by \cite{CLR}, $\pi_1(T)$ has a finite index subgroup with a non-abelian free quotient.  That is, $\pi_1(T)$ is large.  
		
		To show that $\pi_1(T)$ is not very large, we use Lemma~\ref{lem:summand}.  We begin by computing the first homology of the universal abelian cover of $T$.
		We can use the Wirtinger presentation to write down a presentation for $\pi_1(T)$:

	$$\pi_1(T) \cong \left<a,b,c,d,e,f \left|\right. ace, bdf, (fc)b(fc)^{-1}a^{-1}, (be)d(be)^{-1}c^{-1}, (da)f(da)^{-1}e^{-1} \right>.$$ 
	Using the relation that $e=(ac)^{-1}$ and $f=(bd)^{-1}$, we get a simplified presentation:
	 $$\pi_1(T) \cong \left<a,b,c,d \left|\right. (d^{-1}b^{-1}c)b(d^{-1}b^{-1}c)^{-1}a^{-1}, (bc^{-1}a^{-1})d(bc^{-1}a^{-1})^{-1}c^{-1}, (da)d^{-1}b^{-1}(da)^{-1}ac \right>.$$ 
	 In fact, the third relation above can be derived from the first two, and thus we can use the presentation
	 $$\pi_1(T) \cong \left<a,b,c,d \left|\right. (d^{-1}b^{-1}c)b(d^{-1}b^{-1}c)^{-1}a^{-1}, (bc^{-1}a^{-1})d(bc^{-1}a^{-1})^{-1}c^{-1} \right>.$$ 
	 
	  We now change the generating set to $\left\{a, B, c, D\right\}$ where $B=ba^{-1}$ and $D=dc^{-1}$.  Let $\phi: \pi_1(T) \rightarrow H_1(T)$ be the abelianization map.  We note that the image of two of the generators, $a$ and $c$,  under $\phi$ give a basis for $H_1(T)$.  To make the notation easier, we will call $\phi(a)$ (respectively $\phi(c)$), $a$ (respectively $c$).  Let $\Theta = aca^{-1}c^{-1}$.  Since $B$ and $D$ are trivial under $\phi$, they lift to $T_\phi$ and the set  $\{ B, D,  \Theta \}$ generates $H_1(T_\phi)$ as a $\mathbb{Z}[H_1(T)]$-module.  It is straightforward to write down a presentation matrix of $H_1(T_\phi)$ as a $\mathbb{Z}[a^{\pm 1},c^{\pm 1}]$-module (here the rows are the relations and the columns correspond to $B$, $D$, and $\Theta$):
	$$ \begin{bmatrix}
	a + c - 1 & a(a-1) & a- 1\\
	c(1-c) & 1 & c- 1
	\end{bmatrix}.
	$$
	Using row and column operations, we can find a simpler presentation:
	\begin{align*}
	\begin{bmatrix}
	a +c - 1 & a(a-1) & a- 1\\
	c(1-c) & 1 & c- 1
	\end{bmatrix} &\sim
	\begin{bmatrix}
	ac+a-1 & a^2-a & a-1\\
	0 & 1 & c-1
	\end{bmatrix}\\
	&\sim\begin{bmatrix}
	ac+a-1 & a^2-a & -a^2c-a^2-ac+a\\
	0 & 1 & c-1
	\end{bmatrix}\\
	&\sim \begin{bmatrix}
	ac+a-1 & 0 & -2a^2c \\
	0 & 1 & c-1
	\end{bmatrix}\\
	&\sim \begin{bmatrix}
	ac+a-1 & 2a^2c
	\end{bmatrix}
	\end{align*}
Since $gcd(ac+a-1,2a^2c) = 1$, Lemma \ref{lem:gcd} implies that $H_1(T_\phi)$ is torsion-free as a $H_1(T)$-module.  We claim that $H_1(T_\phi)$ is not a free module.  To see this, consider the ring homomorphism 
$\xi:  \mathbb{Z}[H_1(T)] \rightarrow \mathbb{Z}_2[a^{\pm 1},c^{\pm 1}]$ where we reduce the coefficient mod $2$.  This map endows $ \mathbb{Z}_2[a^{\pm 1},c^{\pm 1}]$ with the structure of a right $\mathbb{Z}[H_1(T)]$-module.
If $H_1(T_\phi)$ were free as $\mathbb{Z}[H_1(T)]$-module then the left $\mathbb{Z}_2[a^{\pm 1},c^{\pm 1}]$-module
$$\mathbb{Z}_2[a^{\pm 1},c^{\pm 1}] \otimes_{\mathbb{Z}[H_1(T)]} H_1(T_\phi)$$
 would be free as well.  However, this tensor product module is presented by the matrix $[ac+a+1,0]$, and is thus not free.

Suppose that $\pi_1(T)$ is very large.  We note that there is a unique covering space of $T$ with deck transformation group isomorphic to $\mathbb{Z}^2$, up to covering space isomorphism.  Hence by Lemma~\ref{lem:summand}, $H_1(T_\phi)\cong \mathbb{Z}[H_1(T)] \oplus A$.  Since the rank of $H_1(T_\phi)$ (as a $\mathbb{Z}[H_1(T)]$-module) in our case is $1$, it follows that $A$ must have rank $0$ and hence is a torsion module.  However, we know that $H_1(T_\phi)$ is torsion-free so $A=0$.  This contradicts the fact that $H_1(T_\phi)$ is not a free module and the result follows.

	\end{proof}
%
%


\subsection{A genus-3 homology handlebody with cut number equal to $2$}\label{sec:g3}

\begin{proposition}
There is a genus $3$ homology handlebody $Y$ with $c(Y) = 2$.
\label{prop:g3}
\end{proposition}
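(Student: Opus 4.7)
The plan is to construct $Y$ as the genus-$3$ string link complement shown in Figure~\ref{fig:g3} and then to bound $c(Y)$ above and below. I would first read off a Wirtinger presentation of $\pi_1(Y)$ from the diagram using a meridian for each arc, and verify via abelianization together with Alexander duality (or Mayer--Vietoris) that $Y$ is indeed a genus-$3$ homology handlebody. For the inequality $c(Y) \geq 2$, I would exhibit an explicit surjection $\pi_1(Y) \twoheadrightarrow F(2)$: the string link should be arranged so that two of its three strands can be homotopically ``separated'' from one another, allowing a map that sends the two corresponding meridians to the generators of $F(2)$ while respecting every Wirtinger relator; equivalently, I would produce two disjoint $2$-sided properly embedded surfaces in $Y$ with connected complement.

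The substantive step is the inequality $c(Y) \leq 2$. Suppose for contradiction that $c(Y) \geq 3$. Since $\mathbb{Z}^3$ is a quotient of $F(3)$, Lemma~\ref{lem:relative} yields a basepoint $p \in Y$ and a surjection $\phi \colon \pi_1(Y) \twoheadrightarrow \mathbb{Z}^3$ such that
\[
H_1(Y_\phi, p_\phi) \;\cong\; \mathbb{Z}[\mathbb{Z}^3]^{3} \oplus A
\]
for some left $\mathbb{Z}[\mathbb{Z}^3]$-module $A$. Because $H_1(Y) \cong \mathbb{Z}^3$, every such surjection factors through abelianization and is determined up to an automorphism of $\mathbb{Z}^3$, so up to covering space isomorphism there is only one $Y_\phi$ to analyze, and I may take $\phi$ to be the abelianization map. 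Using Fox calculus on the Wirtinger presentation from the first step, I would then write down an explicit presentation matrix for $H_1(Y_\phi, p_\phi)$ over $\Lambda := \mathbb{Z}[\mathbb{Z}^3] = \mathbb{Z}[a^{\pm 1}, b^{\pm 1}, c^{\pm 1}]$, namely the Jacobian of the Wirtinger relators with one column suppressed to account for the basepoint.

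Two ingredients then combine to yield a contradiction. First, the Remark from the introduction gives $\mathrm{rank}_\Lambda H_1(Y_\phi) = g - 1 = 2$; combined with the fact that the augmentation ideal $I\mathbb{Z}^3 \subset \Lambda$ has $\Lambda$-rank $1$, the long exact sequence of the pair $(Y_\phi, p_\phi)$ forces $\mathrm{rank}_\Lambda H_1(Y_\phi, p_\phi) = 3$, so any $A$ above must be pure torsion. Second, I would show by direct calculation that the computed presentation matrix does not admit any splitting with a free summand of rank $3$. Following the template of Proposition~\ref{prop:tri}, this involves simplifying the matrix by elementary row and column operations, invoking a variant of Lemma~\ref{lem:gcd} to control what torsion can appear, and detecting the failure of a free summand by passing to a residue ring such as $\mathbb{F}_p[\mathbb{Z}^3]$ or by analyzing Fitting ideals. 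The main obstacle is precisely this module-theoretic verification: the presentation matrix lives over a three-variable Laurent polynomial ring rather than a one- or two-variable ring, and ruling out a rank-$3$ free summand (as opposed to merely detecting non-freeness, as in the Tripus argument) is appreciably more delicate.
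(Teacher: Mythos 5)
Your proposal follows essentially the paper's own strategy: realize $Y$ as the pure $3$-string link complement of Figure~\ref{fig:g3}, get $c(Y)\geq 2$ by deleting a strand, then suppose $c(Y)\geq 3$, invoke Lemma~\ref{lem:relative} with $\Gamma=\mathbb{Z}^3$, and study a Fox-calculus presentation of $H_1(Y_\phi,p_\phi)$ over $\mathbb{Z}[H_1(Y)]$. Two small corrections to your outline. First, the presentation matrix for the \emph{relative} module $H_1(Y_\phi,p_\phi)$ is the full Fox Jacobian with no column suppressed; deleting a column is what one does to pass toward the absolute module $H_1(Y_\phi)$, which is the opposite of what is wanted here. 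Second, the endgame is not logically more delicate than the Tripus case but has exactly the same shape: one shows $H_1(Y_\phi,p_\phi)$ is \emph{not free} (the paper reduces the $2\times 5$ Jacobian mod $3$ and checks via Magma that the second elementary ideal is proper) and separately that it is \emph{torsion-free} (the paper pushes the matrix to $\mathbb{Z}[t^{\pm1},x^{\pm1}]$ by setting $b=t$ and applies Lemma~\ref{lem:gcd}); since rank considerations force $A$ in $\mathbb{Z}[H_1(Y)]^3\oplus A$ to be torsion, torsion-freeness gives $A=0$ and hence freeness, a contradiction. The genuinely new difficulty relative to the Tripus is purely computational (three variables, larger matrix), not conceptual.
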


\begin{proof}
\begin{figure}[h]
   \begingroup%
     \makeatletter%
     \providecommand\color[2][]{%
       \errmessage{(Inkscape) Color is used for the text in Inkscape, but the package 'color.sty' is not loaded}%
       \renewcommand\color[2][]{}%
     }%
     \providecommand\transparent[1]{%
       \errmessage{(Inkscape) Transparency is used (non-zero) for the text in Inkscape, but the package 'transparent.sty' is not loaded}%
       \renewcommand\transparent[1]{}%
     }%
     \providecommand\rotatebox[2]{#2}%
     \ifx\svgwidth\undefined%
       \setlength{\unitlength}{113.98312855bp}%
       \ifx\svgscale\undefined%
         \relax%
       \else%
         \setlength{\unitlength}{\unitlength * \real{\svgscale}}%
       \fi%
     \else%
       \setlength{\unitlength}{\svgwidth}%
     \fi%
     \global\let\svgwidth\undefined%
     \global\let\svgscale\undefined%
     \makeatother%
     \begin{picture}(1,1.85116849)%
       \put(0,0){\includegraphics[width=\unitlength,page=1]{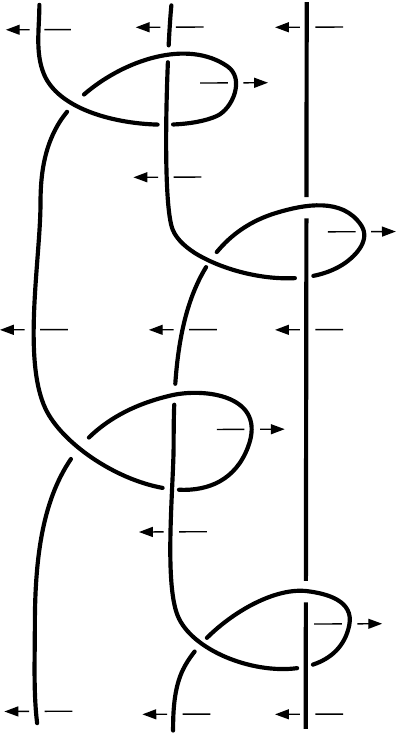}}%
       \put(0.13736498,1.80426085){\color[rgb]{0,0,0}\makebox(0,0)[lb]{$x$}}%
       \put(0.47393646,1.80211707){\color[rgb]{0,0,0}\makebox(0,0)[lb]{$a$}}%
       \put(0.83408931,1.80426085){\color[rgb]{0,0,0}\makebox(0,0)[lb]{$r$}}%
       \put(0.46750513,1.42052649){\color[rgb]{0,0,0}\makebox(0,0)[lb]{$b$}}%
       \put(0.52753063,1.04322349){\color[rgb]{0,0,0}\makebox(0,0)[lb]{$d$}}%
       \put(0.94556532,1.30261926){\color[rgb]{0,0,0}\makebox(0,0)[lb]{$c$}}%
       \put(0.84052069,1.03893583){\color[rgb]{0,0,0}\makebox(0,0)[lb]{$s$}}%
       \put(0.66258801,0.79883382){\color[rgb]{0,0,0}\makebox(0,0)[lb]{$w$}}%
       \put(0.46964896,0.44725602){\color[rgb]{0,0,0}\makebox(0,0)[lb]{$e$}}%
       \put(0.92412767,0.2993362){\color[rgb]{0,0,0}\makebox(0,0)[lb]{$f$}}%
       \put(0.83194559,0.05923408){\color[rgb]{0,0,0}\makebox(0,0)[lb]{$t$}}%
       \put(0.48679906,0.07209684){\color[rgb]{0,0,0}\makebox(0,0)[lb]{$g$}}%
       \put(0.13950875,0.07852812){\color[rgb]{0,0,0}\makebox(0,0)[lb]{$u$}}%
       \put(0.12878993,1.03893583){\color[rgb]{0,0,0}\makebox(0,0)[lb]{$z$}}%
       \put(0.62185633,1.564){\color[rgb]{0,0,0}\makebox(0,0)[lb]{$y$}}%
     \end{picture}%
   \endgroup%
\caption{The homology handlebody $Y$ is the complement of this pure 3-string link.  Generators of $\pi_1(Y)$ are labelled.}
\label{fig:g3}
\end{figure}

Let $Y$ denote the homology handlebody that is the complement in $D^2 \times I$ of the three-component pure string link show in Figure \ref{fig:g3}.  Removing the middle string leaves of with the trivial string link whose complement is a handlebody.  Hence there is a surjective map $\pi_1(Y) \rightarrow F(2)$, hence $c(Y)\geq 2$.

A presentation of $\pi_1(Y)$ can be computed via the Wirtinger presentation:

$$\pi_1(Y) \cong \left<\begin{array}{c|c}
a, b, c, d, e, f, g, &
zuz^{-1} w^{-1}, eze^{-1}w^{-1}, xzx^{-1}y^{-1}, bxb^{-1}y^{-1}, fsf^{-1}t^{-1}, crc^{-1}s^{-1}, \\
r, s, t, x, y, z, u, w &
ege^{-1}f^{-1}, tet^{-1}f^{-1}, wdw^{-1}e^{-1}, bdb^{-1}c^{-1}, sbs^{-1}c^{-1}, yay^{-1}b^{-1}\end{array}\right>$$

First notice that the generators $a,g,r,u$ each appear in only one relation, and can thus be eliminated along with those relations to produce the following smaller presentation:

$$\pi_1(Y) \cong \left<\begin{array}{c|c}
b, c, d, e, f, &
bxb^{-1}y^{-1}, xzx^{-1}y^{-1}, eze^{-1}w^{-1}, tet^{-1}f^{-1},\\
s, t, x, y, z, w &
sbs^{-1}c^{-1},  fsf^{-1}t^{-1},bdb^{-1}c^{-1},wdw^{-1}e^{-1} \end{array} \right>$$

We shall obtain an even simpler presentation by combining relations.  The first four relations imply that
$$ x = b^{-1}yb = b^{-1}xzx^{-1}b = b^{-1}xe^{-1}wex^{-1}b = b^{-1}xt^{-1}f^{-1}twt^{-1}ftx^{-1}b$$
Similarly, the last five relations imply that $b = f^{-1}t^{-1}fbw^{-1}t^{-1}ftwb^{-1}f^{-1}tf$.  These observations lead us to a presentation with seven generators and only two relations:
$$ \pi_1(Y) \cong \left< b,f,t,x,w \ | \ b^{-1}xt^{-1}f^{-1}twt^{-1}ftx^{-1}bx^{-1}, f^{-1}t^{-1}fbw^{-1}t^{-1}ftwb^{-1}f^{-1}tfb^{-1} \right>$$

Consider the abelianization map $\phi:\pi_1(Y) \rightarrow H_1(Y)$.  Note that $\phi(x)=\phi(w)$ and $\phi(b)=\phi(f)$, so that the images of $b$, $t$, and $x$ give a basis for $H_1(Y)$.  Considering the universal abelian cover $Y_{\phi}$, we choose a basepoint $p \in Y$ and let $p_\phi = \phi^{-1}(p) \in Y_{\phi}$.  Using the Fox differential calculus, we can obtain a presentation for the structure of $H_1(Y_{\phi},p_{\phi})$ as a left module over $\mathbb{Z}[H_1(Y)] \cong \mathbb{Z}[b^{\pm 1}, t^{\pm 1}, x^{\pm 1}]$.  With respect to the generators $b,f,t,x,w$, a presentation matrix is

\begin{equation}
\begin{bmatrix}
btx-bt & x^2-x &  bx^2 - bx -x^2 + x & bt - b^2t - btx & tx \\
btx - b^2tx - bt^2x & bt^2x - t^2x - btx + tx + b^2 &  btx - tx + b^3 - b^2 & 0& b^3t - b^2t
\end{bmatrix}
\label{g3-1}
\end{equation}

We first claim that this module is not free.  We use $\xi: H_1(Y) \rightarrow  \mathbb{Z}_3[b^{\pm 1}, t^{\pm 1}, x^{\pm 1}]$, given by reducing coefficients modulo $3$, to endow $\mathbb{Z}_3[b^{\pm 1}, t^{\pm 1}, x^{\pm 1}]$ with the structure of a right $\mathbb{Z}[H_1(Y)]$-module.  If $H_1(Y_{\phi},p_{\phi})$ were a free $\mathbb{Z}[H_1(Y)]$-module, then the left $\mathbb{Z}_3[b^{\pm 1}, t^{\pm 1}, x^{\pm 1}]$-module
$$\mathbb{Z}_3[b^{\pm 1}, t^{\pm 1}, x^{\pm 1}] \otimes_{\mathbb{Z}[H_1(Y)]}  H_1(Y_{\phi},p_{\phi}) $$
would also be free.

Recall that given a rank $r$ module, $N$, over a multivariable Laurent polynomial ring $R$, is projective if and only if the $r^{th}$ elementary ideal $E_r(N)$ of $N$ is equal to all of $R$.  We used the Magma Computational Algebra System \cite{Magma} to verify that the ideal generated by the $2\times 2$ minors of the above matrix, viewed modulo $3$, is indeed proper in $\mathbb{Z}_3[b^{\pm 1}, t^{\pm 1}, x^{\pm 1}]$.  Therefore the $\mathbb{Z}_3[b^{\pm 1}, t^{\pm 1}, x^{\pm 1}]$-module
$$\mathbb{Z}_3[b^{\pm 1}, t^{\pm 1}, x^{\pm 1}] \otimes_{\mathbb{Z}[H_1(Y)]}  H_1(Y_{\phi},p_{\phi}) $$
is not projective and hence not free.  It follows that $H_1(Y_{\phi},p_{\phi})$ is not a free $\mathbb{Z}[H_1(Y)]$-module.

Our Magma source code can be found below.  Note that here $B = b^{-1}$, $T = t^{-1}$, and $X = x^{-1}$.

\vspace{1pc}
\hrule
\vspace{1pc}
\lstinputlisting{magma3.txt}
\vspace{1pc}
\hrule
\vspace{1pc}

We now claim that the module $H_1(Y_{\phi},p_{\phi})$ is torsion-free.  After multiplying the second row of the matrix (\ref{g3-1}) by $x$, adding $b^2-b^3$ times the first row to the second row, eliminating a generator and relation, and rescaling several entries by units we obtain a smaller presentation matrix:

\begin{equation}
\begin{bmatrix}
-b^3x + b^3 + b^2x -bx^2 - tx^2 - b^2 + x^2\\
-b^3x + bt^2x + b^3 + b^2x - btx - t^2x + tx\\
-b^4x + b^4 + 2b^3x-b^3-b^2x + btx-tx\\
b^2 + bx - 2b - x + 1
\label{g3-2}
\end{bmatrix}^T
\end{equation}

Now consider the ring homomorphism $\eta:\mathbb{Z}[H_1(Y)] \rightarrow \mathbb{Z}[ t^{\pm 1}, x^{\pm 1}]$ generated by the homomorphism $H_1(Y) \rightarrow <t,x>$ sending $b$ and $t$ to $t$ and $x$ to $x$.  As before $\eta$ induces a left $\mathbb{Z}[ t^{\pm 1}, x^{\pm 1}]$-module structure on
$$ \mathbb{Z}[ t^{\pm 1}, x^{\pm 1}] \otimes_{\mathbb{Z}[H_1(Y)]} H_1(Y_{\phi},p_{\phi})$$
which has torsion if $H_1(Y_{\phi},p_{\phi})$ has torsion.  We obtain a presentation matrix for the above tensor module by setting $b = t$ in (\ref{g3-2}) and rescaling several entries by units:
$$
\begin{bmatrix}
-t^3x + t^3 + t^2x -2tx^2 - t^2 + x^2\\
t^2 - tx + x\\
-t^3x + t^3 + 2t^2x - t^2 - x\\
t^2 + tx - 2t - x + 1
\end{bmatrix}^T
$$
We verified in Magma that the first two entries of this matrix (in fact, any two entries) are relatively prime.  By Lemma \ref{lem:gcd}, the module is torsion-free.

Thus,  $H_1(Y_{\phi},p_{\phi})$ is a torsion-free module of rank $3$.  Suppose that $\pi_1(Y)$ maps onto a free group of rank $3$.  Since $\beta_1(Y)=3$, there is a unique $\mathbb{Z}^3$ covering space (up to covering space isomorphism) so by Lemma~\ref{lem:relative}, $H_1(Y_{\phi},p_{\phi}) \cong \mathbb{Z}[H_1(Y)]^3 \oplus A$ for some $A$.  Since $H_1(Y_{\phi},p_{\phi})$ is rank $3$, it follows that $A$ must be a torsion module.  However $H_1(Y_{\phi},p_{\phi})$ is torsion free so $A=0$. This contradicts the fact that $H_1(Y_{\phi},p_{\phi})$ is not free and it follows that $c(Y) = 2$.  
\end{proof}

\bibliographystyle{alpha}
\def\MR#1{}
\bibliography{hhbib}

\end{document}